\newtheorem{theorem}{Theorem}[section]
\newtheorem{corollary}[theorem]{Corollary}
\theoremstyle{definition}
\DeclareMathOperator\Ap{Ap} 
\renewcommand\>{\right\rangle}
\newcommand\<{\left\langle}
\newcommand\ZZ{\mathbb{Z}}
\newcommand\QQ{\mathbb{Q}}
\begin{document}

\mbox{}
\title{Elasticity in Ap\'ery sets}

\author[J.~Autry]{Jackson Autry}
\address{Mathematics and Statistics Department\\San Diego State University\\San Diego, CA 92182}
\email{jautry@sdsu.edu}

\author[T.~Gomes]{Tara Gomes}
\address{Mathematics and Statistics Department\\San Diego State University\\San Diego, CA 92182}
\email{gomes.tara@gmail.com}

\author[C.~O'Neill]{Christopher O'Neill}
\address{Mathematics and Statistics Department\\San Diego State University\\San Diego, CA 92182}
\email{cdoneill@sdsu.edu}

\author[V.~Ponomarenko]{Vadim Ponomarenko}
\address{Mathematics and Statistics Department\\San Diego State University\\San Diego, CA 92182}
\email{vponomarenko@sdsu.edu}

\date{\today}

\begin{abstract}
A numerical semigroup $S$ is an additive subsemigroup of the non-negative integers, containing zero, with finite complement. Its multiplicity $m$ is its smallest nonzero element.  The Ap\'ery set of $S$ is the set $\Ap(S) = \{n \in S : n-m \notin S\}$. Fixing a numerical semigroup, we ask how many elements of its Ap\'ery set have nonunique factorization, and define several new invariants.
\end{abstract}

\maketitle

\section{Introduction}
\label{sec:intro}

Every child's first semigroup is the natural numbers, and their first factorization theorem is the Fundamental Theorem of Arithmetic, which gives unique factorization as a product of primes.  The other operation, addition, is not addressed.  Much attention has been given to factorization in various semigroups; for a general introduction, see~\cite{MR2194494}.  Often, the operation is multiplication \cite{MR2869518,MR3941724,MR3035125}, but addition is worth studying as well~\cite{MR556658}; it will be our operation here.  

A \emph{numerical semigroup} $S$ is a subset of $\ZZ_{\ge 0}$ with finite complement that is closed under $+$ and contains $0$.  Numerical semigroups have been the subject of considerable recent study \cite{MR3885968,Hilbert,MR3722042,MR3503387,MR3295662,MR3324682}.  Many applications are known, such as in coding theory~\cite{MR3288285}.  For a general introduction to numerical semigroups, see~\cite{MR3558713} or~\cite{MR2549780}.  


The atoms of a numerical semigroup $S$ are the nonzero elements that cannot be expressed as the combination of two nonzero elements.  The set $\mathcal A(S)$ of atoms of $S$ is finite; we call $e(S) = |\mathcal A(S)|$ the \emph{embedding dimension} of $S$.   We write $\<a_1, a_2,\ldots, a_k\>$, with $a_i$ listed in ascending order, to denote the numerical semigroup with atoms $a_1, \ldots, a_k$.  The smallest atom $a_1$ is also the smallest nonzero element of $S$; we call it $m(S)$,  the \emph{multiplicity} of $S$.  

An important tool for the study of numerical semigroups, from \cite{MR17942}, is the Ap\'ery set
$$\Ap(S) = \{x \in S : x - m(S) \notin S\},$$
which contains the smallest element of $S$ in each congruence class modulo $m(S)$.  It~is easy to show that $|\Ap(S)| = m(S)$ and $\mathcal A(S) \setminus \{m(S)\} \subseteq \Ap(S)$.  
If we want to express elements of $S$ as a free combination of atoms, $\mathcal A(S)$ is what we study.  However, if we want to use as many copies of $m(S)$ as possible and other atoms as little as possible, we look to $\Ap(S)$ instead.  

We study properties about factorization into atoms. The most famous factorization invariant is elasticity.  Given a semigroup $S$ and some $x\in S$, we write $x$ as the combination of atoms in every possible way.  The \emph{elasticity} of $x$, denoted $\rho(x)$, is the the largest number of atoms that can be used, divided by the smallest number.   Clearly $\rho(x) \ge 1$; if equality holds, we say $x$ is \emph{half-factorial}.  Conventionally we say that the unit $0$ is half-factorial.

In this note, we consider the elasticity function restricted to elements of $\Ap(S)$.  We write $\rho(\Ap(S))$ for the maximum elasticity over the elements of $\Ap(S)$; if $\rho(\Ap(S)) = 1$, we say that $S$ is \emph{Ap\'ery half factorial}, or AHF.  

We can visualize the factorization structure of $\Ap(S)$ using a partially ordered set $(\Ap(S), \preceq)$ with $x \preceq y$ whenever $y - x \in S$, called the \emph{Ap\'ery poset} of $S$.  The Hasse diagrams of two Ap\'ery posets are depicted in Figure~\ref{f:posets}.  The atoms of the Ap\'ery poset (i.e., the elements directly above the unique minimal element $0$) are precisely the elements of $\mathcal A(S)$ apart from $m(S)$, and an edge connects $x$ up to $y$ in the Hasse diagram exactly when $y - x \in \mathcal A(S)$.  
This leads to the following interesting observation.  

\begin{theorem}\label{t:lengthgrading}
A numerical semigroup $S$ has graded Ap\'ery poset if and only if $S$ is~AHF.  
\end{theorem}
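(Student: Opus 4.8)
The plan is to translate this factorization statement into a purely order-theoretic one by identifying the factorizations of an Ap\'ery element with the saturated chains below it in the Ap\'ery poset. Throughout, write $m = m(S)$. The structural fact that drives everything is that $\Ap(S)$ is closed under summands: if $w \in \Ap(S)$ and $w = u + v$ with $u, v \in S$, then $u, v \in \Ap(S)$, since otherwise (say) $u - m \in S$ would give $w - m = (u - m) + v \in S$, contradicting $w \in \Ap(S)$. I would extract two corollaries. First, no factorization of an $x \in \Ap(S)$ can use $m$, for $x = m + y$ with $y \in S$ forces $x - m = y \in S$; hence every factorization of $x$ uses only the atoms $\mathcal A(S) \setminus \{m\}$, which are exactly the atoms of the Ap\'ery poset. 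Second, every partial sum of such a factorization is again a summand of $x$, hence lies in $\Ap(S)$.

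Using these I would set up the dictionary. Given a factorization $x = a_{i_1} + \cdots + a_{i_k}$ into atoms, the partial sums $s_j = a_{i_1} + \cdots + a_{i_j}$ form a chain $0 = s_0 \prec s_1 \prec \cdots \prec s_k = x$ of elements of $\Ap(S)$, and each step satisfies $s_j - s_{j-1} = a_{i_j} \in \mathcal A(S)$, so by the cover description in the excerpt each step is a cover; thus we obtain a saturated chain from $0$ to $x$ of length $k$. Conversely, a saturated chain $0 = s_0 \prec \cdots \prec s_k = x$ has each difference $s_j - s_{j-1} \in \mathcal A(S)$, and these sum to a length-$k$ factorization of $x$. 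Reordering a factorization merely permutes the associated chains, so the \emph{set} of factorization lengths of $x$ coincides with the set of lengths of saturated $0$-to-$x$ chains. In particular, $x$ is half-factorial precisely when all saturated chains from $0$ to $x$ have the same length.

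Finally I would assemble the equivalence. Since $\rho(x)$ is the largest factorization length of $x$ divided by the smallest, $\rho(x) = 1$ iff $x$ is half-factorial; hence $S$ is AHF iff every $x \in \Ap(S)$ is half-factorial iff, for every $x$, all saturated chains from $0$ to $x$ have equal length. It remains to match this with gradedness of $(\Ap(S), \preceq)$, which has unique minimum $0$. If the chain condition holds, define $r(x)$ to be the common length of saturated $0$-to-$x$ chains; for any cover $x \prec y$, prepending a saturated chain to $x$ and appending $y$ shows $r(y) = r(x) + 1$, so $r$ is a rank function and the poset is graded. Conversely, a rank function $r$ with $r(0) = 0$ forces every saturated $0$-to-$x$ chain to have length $r(x)$, recovering the chain condition.

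The step I would take the most care with is the summand-closure property together with its two corollaries: these are exactly what guarantee that the factorizations of $x$ stay \emph{inside} the poset and become genuine saturated chains, rather than chains that wander out of $\Ap(S)$ or slip in a copy of $m$. Once the dictionary between factorizations and saturated chains is in place, the remainder is routine poset bookkeeping.
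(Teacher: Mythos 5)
Your proof is correct and follows essentially the same route as the paper's: both identify factorizations of an Ap\'ery element with (saturated) chains from $0$ to that element in the Ap\'ery poset, so that half-factoriality of every element of $\Ap(S)$ is equivalent to gradedness. The only difference is one of detail --- you explicitly verify the summand-closure of $\Ap(S)$ and the resulting cover description, facts the paper asserts in its preceding discussion and invokes without proof.
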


\begin{proof}
By the above discussion, each length $\ell$ chain (set of mutually comparable elements) from $0$ to an element $n \in S$ corresponds to an ordered factorization of~$n$ with length $\ell$.  As such, two different chain lengths are present if and only if $n$ is not half-factorial.  
\end{proof}

\begin{figure}[t!]
\begin{center}
\begin{subfigure}[t]{.38\linewidth}
\begin{center}
\includegraphics[width=1.5in]{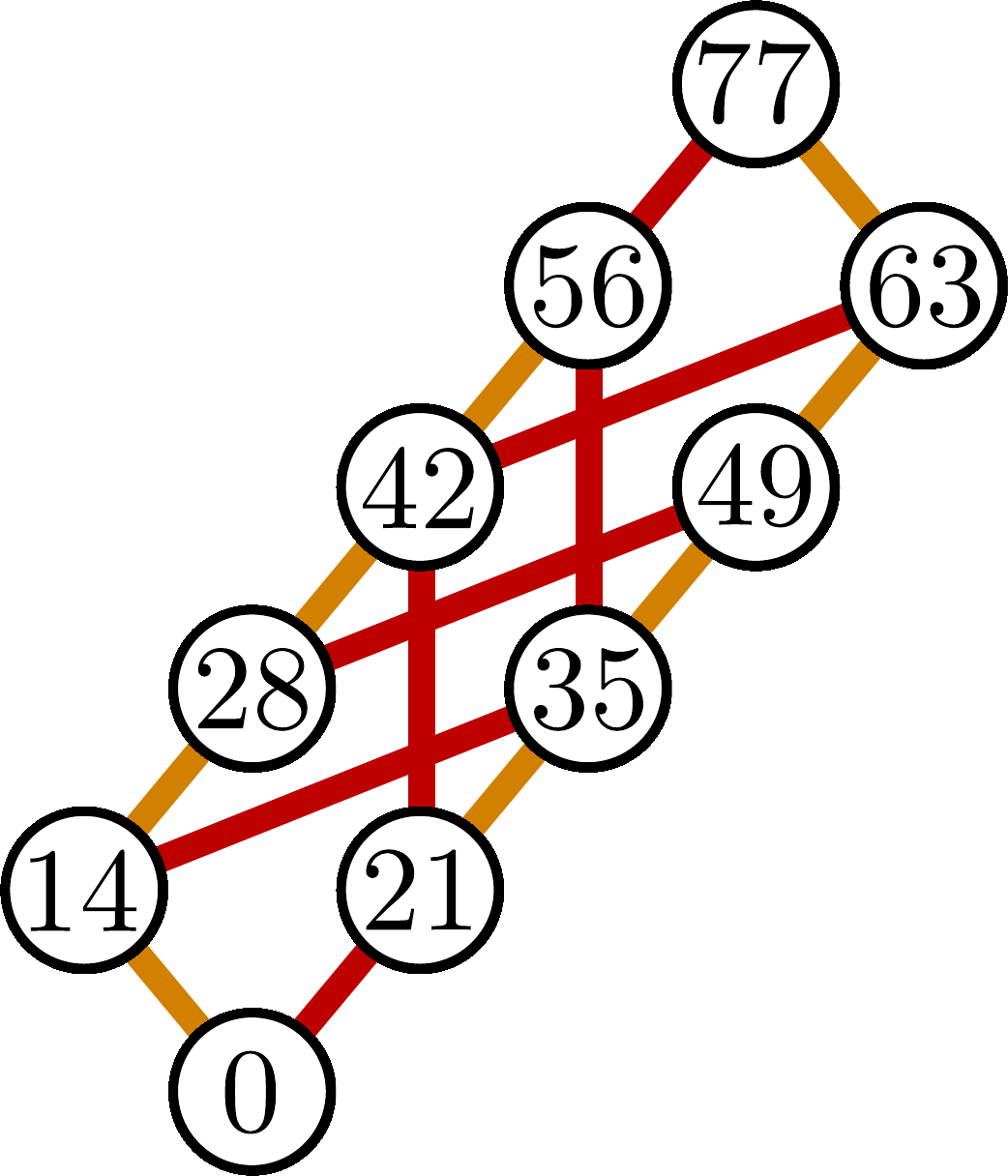}
\end{center}
\caption{Poset for $S = \<10,14,21\>$, as in Theorem~\ref{t:smallahff} with $n = 2$ and $p = 7$.}
\end{subfigure}
\qquad\qquad
\begin{subfigure}[t]{.38\linewidth}
\begin{center}
\includegraphics[width=1.5in]{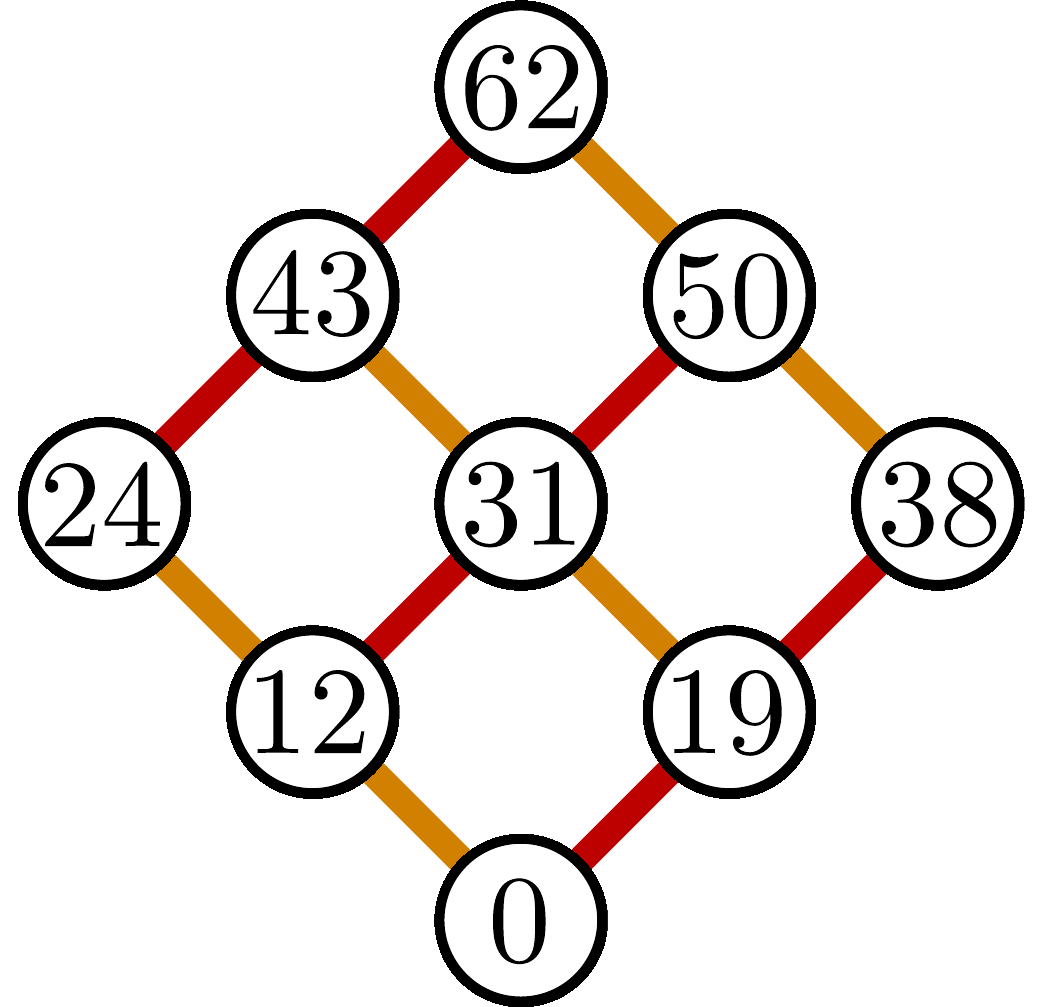}
\end{center}
\caption{Poset for $S = \<9,12,19\>$, as in Theorem~\ref{t:ahfexamples} with $n = 3$.}
\end{subfigure}
\end{center}
\caption{Examples of Ap\'ery posets.}
\label{f:posets}
\end{figure}

\section{Ap\'ery elasticities}
\label{sec:maxaperyelast}

We begin by observing that if $e(S) = 2$, then we can write $S = \<m, a\>$ and 
$$\Ap(S) = \{0, a, 2a, \ldots, (m-1)a\}.$$
Each element of $\Ap(S)$ is then not only half-factorial, but has unique factorization.  On~the other hand, if $S$ has \emph{maximal embedding dimension} (i.e., $e(S) = m(S)$), then $S = \<m, a_1, a_2, \ldots, a_{m-1}\>$ and $\Ap(S) = \{0, a_1, a_2, \ldots, a_{m-1}\}$.  Again each element of $\Ap(S)$ has unique factorization.  These observations are extended slightly as follows.

\begin{theorem}\label{t:extremenumgens}
Let $S$ be a numerical semigroup with $e(S) = 2$ or $e(S) \ge m(S) - 1$.  Then  $S$ is Ap\'ery half-factorial.
\end{theorem}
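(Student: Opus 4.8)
The plan is to dispatch the extreme cases using the preceding discussion and then concentrate all of the work on the single genuinely new case $e(S) = m - 1$, where I write $m = m(S)$. First I would note that $e(S) \le m$ always holds, since $\mathcal A(S) \setminus \{m\} \subseteq \Ap(S) \setminus \{0\}$ forces $e(S) - 1 \le m - 1$; hence the hypothesis $e(S) \ge m - 1$ leaves only the possibilities $e(S) = m - 1$ and $e(S) = m$. The cases $e(S) = 2$ and $e(S) = m$ (maximal embedding dimension) were already shown above to give unique factorization throughout $\Ap(S)$, hence AHF, so the theorem reduces to establishing the case $e(S) = m - 1$.

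In that case I would first pin down the shape of $\Ap(S)$ by counting. Since $|\Ap(S)| = m$ and $\Ap(S)$ contains $0$ together with the $m - 2$ non-multiplicity atoms $\mathcal A(S) \setminus \{m\}$, there is exactly one remaining element, say $w$; it is nonzero and is not an atom. Every element of $\Ap(S)$ other than $w$ is automatically half-factorial ($0$ by convention, and each atom has the single length-one factorization consisting of itself), so it suffices to prove that $w$ is half-factorial.

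The engine of the argument is the divisor-closure of the Apéry set: if $x \in \Ap(S)$ and $x - s \in S$ for some $s \in S$, then $x - s \in \Ap(S)$, since otherwise $x - s - m \in S$ would give $x - m = s + (x - s - m) \in S$, contradicting $x \in \Ap(S)$. I would apply this to $w$. Take any factorization $w = \sum_i c_i a_i$ into atoms, of length $L = \sum_i c_i$; because $w$ is not an atom, $L \ge 2$. Choosing an index $i$ with $c_i \ge 1$, the element $w - a_i$ lies in $S$ (in particular $a_i \ne m$, since $w - m \notin S$), is nonzero because $L \ge 2$, and is strictly below $w$; by divisor-closure it lies in $\Ap(S)$. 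The only nonzero non-atom element of $\Ap(S)$ is $w$ itself, so $w - a_i$ must be an atom, whence its factorization has length $1$ and $L - 1 = 1$. Thus every factorization of $w$ has length exactly $2$, giving $\rho(w) = 1$ and showing $S$ is AHF.

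The one step requiring care is the counting that isolates the single extra element $w$ and the observation that it can therefore be reached only as a sum of exactly two atoms; once the divisor-closure property is recorded, the conclusion is immediate, so I do not anticipate a serious obstacle beyond this bookkeeping.
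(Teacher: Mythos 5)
Your proposal is correct and follows essentially the same route as the paper: dispatch $e(S)=2$ and $e(S)=m(S)$ via the observations preceding the theorem, then note that when $e(S)=m(S)-1$ the Ap\'ery set contains exactly one non-atom nonzero element, which admits only length-$2$ factorizations. The paper asserts this last claim in a single sentence without justification, whereas you supply the missing details (the divisor-closure of $\Ap(S)$ and the counting argument), so your write-up is a fleshed-out version of the same proof.
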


\begin{proof}
If $e(S) = m(S) - 1$, then the only element of $\Ap(S)$ that is not an atom only has length 2 factorizations.  
\end{proof}

Theorem \ref{t:extremenumgens} can't be extended in general to smaller embedding dimension than $m(S)-1$.  Consider $S = \<5, 6, 9\>$, where $m = 5$, $a_1 = 6, a_2 = 9$.    Now 
$$\Ap(S)=\{0,a_1,a_2, 2a_1, 3a_1=2a_2\},$$
so $\rho(3a_1) = \frac{3}{2}$.  

Given a subset $T \subset S$, define the \emph{set of elasticities} of $T$ as
$$R(T) = \{\rho(n) : n \in T\}.$$
This invariant has been studied for numerical semigroups in \cite{MR3602830}, wherein $R(S)$ is characterized for all but finitely many elasticities coming from ``small'' elements of~$S$.  As such, $R(\Ap(S))$ is a natural starting place for studying the remainder of $R(S)$.

It is easy to see that if $e(S) = m(S) - 2$, either $S$ is AHF or $R(\Ap(S)) = \{1,\frac{3}{2}\}$.  Determining $R(\Ap(S))$ for other near-maximal embedding dimensions remains open.

We now present a family of semigroups in Theorem~\ref{t:masterexample} which demonstrate several extremal behaviors, as discussed thereafter.  

\begin{theorem}\label{t:masterexample}
Fix $a > b \ge 1$ with $\gcd(a,b) = 1$.  There is a numerical semigroup $S$ with (i) $R(\Ap(S)) = \{1, \tfrac{a}{b}\}$ and (ii) only one element of $\Ap(S)$ has elasticity $\tfrac{a}{b}$.  
\end{theorem}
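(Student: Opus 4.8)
The plan is to exhibit, for each pair $a > b \ge 2$, an explicit three-generated semigroup whose Apéry set is completely transparent, and to treat the integer case $b = 1$ separately. Set $m = a+b$, choose an integer $t$ with $\gcd(t, a+b) = 1$ and $bt > a+b$, and let $S = \<a+b,\ bt,\ at\>$. The distinguished element will be $n^\ast = abt$, which carries the two factorizations $n^\ast = a\cdot(bt) = b\cdot(at)$ of lengths $a$ and $b$; everything rests on showing that inside $\Ap(S)$ only $n^\ast$ fails to factor uniquely. First I would record the setup: $\gcd(a+b, bt) = \gcd(a+b, at) = 1$ (using $\gcd(a,b) = 1$ and $\gcd(t, a+b) = 1$), so $S$ is a numerical semigroup, and $bt > a+b$ makes $a+b$ the multiplicity while forcing $bt$ and $at$ to be atoms. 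The key structural observation is that, since $w \in \Ap(S)$ means $w - m \notin S$, no factorization of an Apéry element can use the atom $m = a+b$; hence every factorization of an Apéry element is a nonnegative combination $\beta(bt) + \gamma(at)$, governed after dividing by $t$ by representations $\beta b + \gamma a$ of $w/t$ in $\<a,b\>$.

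Next I would propose the candidate set $W = \{\,j\,bt : 0 \le j \le a\,\} \cup \{\,i\,at : 1 \le i \le b-1\,\}$ and verify two points. For \textbf{distinctness}, writing residues as multiples of $c \equiv at \pmod{m}$ and noting $bt \equiv -c$, the set $W$ realizes exactly the consecutive multiples $ic$ for $i = -a, \dots, b-1$; since $\gcd(c, m) = \gcd(at, a+b) = 1$ this is a complete residue system modulo $m$, so $|W| = a+b = m$ with pairwise incongruent entries. For \textbf{membership in the Apéry set}, each $w \in W$ must have no factorization using $m$: reducing a hypothetical factorization modulo $t$ forces the number of copies of $m$ to be divisible by $t$, and after dividing by $t$ one checks that $w/t \in \{\,jb,\ ia,\ ab\,\}$ admits no representation $pa + qb$ with $p, q \ge 1$. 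The case $w/t = ab$ is the heart of it: it reduces to the facts that $ab = pa + qb$ has no solution with both $p, q \ge 1$, and that the residual value at $\alpha' = 1$ is the Frobenius number $ab - a - b$ of $\<a,b\>$, which is not representable. Since $|\Ap(S)| = m$, these two points give $W = \Ap(S)$.

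The same representation analysis yields the factorization counts. For $0 \le j \le a-1$ and $1 \le i \le b-1$, the equations $\beta b + \gamma a = jb$ and $\beta b + \gamma a = ia$ have unique nonnegative solutions (by $\gcd(a,b) = 1$ together with the range bounds $j < a$, $i < b$), so each such element has unique factorization and elasticity $1$; whereas $\beta b + \gamma a = ab$ has exactly the two solutions $(\beta, \gamma) = (a, 0)$ and $(0, b)$, of lengths $a$ and $b$. Thus $n^\ast = abt$ has elasticity $\rho(n^\ast) = a/b$ and is the only non-half-factorial element of $\Ap(S)$, giving $R(\Ap(S)) = \{1, a/b\}$, which is (i) and (ii).

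The main obstacle is the membership step, and I expect the clean appearance of the Frobenius number $ab - a - b$ (paired with $\gcd(t, a+b) = 1$) to be exactly what makes it go through. The genuinely separate difficulty is the boundary case $b = 1$: there $at = a\cdot(bt)$ is not an atom, and in fact no Apéry element can have a length-one factorization, so the integer elasticity $a = a/1$ cannot arise from shortest length $1$. For that case I would instead realize an element equal to $2a$ copies of a small atom and also to a sum $y + z$ of two \emph{distinct} larger atoms, and rerun the same residue-counting and representation bookkeeping on the resulting (three- or four-generated) semigroup to force elasticity $a$ at a single Apéry element with all others half-factorial.
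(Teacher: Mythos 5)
For $b \ge 2$ your construction is essentially the paper's own: the paper takes $S = \langle a+b,\, pa,\, pb \rangle$ with $p$ prime, $p \nmid (a+b)$, and $pb > a+b$, and your $S = \langle a+b,\, bt,\, at \rangle$ merely relaxes primality of the scaling factor to coprimality with $a+b$, which is all that the mod-$t$ (resp.\ mod-$p$) reduction requires. Your verification is correct and in fact more complete than the paper's one-line assertion: the residue argument showing the candidate set meets every class modulo $a+b$, the Frobenius-number argument ($ab - a - b \notin \langle a,b\rangle$) for Ap\'ery membership of $abt$, and the counts of representations of $jb$, $ia$, and $ab$ in $\langle a, b \rangle$ are exactly the details the paper leaves unstated.

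The real divergence is at $b = 1$, and there you have caught a genuine gap in the \emph{paper's} proof rather than introduced one of your own. When $b = 1$ both constructions degenerate: $at = a \cdot (bt)$ (resp.\ $pa = a \cdot p$) is no longer an atom, $S = \langle a+1, t \rangle$ is two-generated, every Ap\'ery element factors uniquely, and the distinguished element has elasticity $1$, not $a$; so the published proof does not establish the theorem for $b=1$ either. Your diagnosis is correct in substance --- an element admitting a length-one factorization is an atom and hence half-factorial, so elasticity $a/1$ must arise from lengths such as $2$ and $2a$ --- though your phrasing ``no Ap\'ery element can have a length-one factorization'' is off, since every atom other than the multiplicity lies in $\Ap(S)$. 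The weakness of your proposal is that the $b=1$ repair remains a sketch, so as written you (like the paper) have only proved the statement for $b \ge 2$. The sketch is viable: for $a = 2$, the semigroup $S = \langle 9, 10, 17, 23 \rangle$ has $\Ap(S) = \{0, 10, 17, 20, 23, 30, 33, 34, 40\}$, in which $40 = 4 \cdot 10 = 17 + 23$ is the unique non-half-factorial element and has elasticity $2$; the analogous pattern (one small atom $x$, two large atoms $y, z$ with $y + z = 2a\,x$, residues kept disjoint so no other interactions occur) generalizes, but to have a complete proof you must carry out that residue and representation bookkeeping explicitly for all $a$.
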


\begin{proof}
Fix a prime $p \nmid (a + b)$ with $a + b < pb$, and let $S = \<a + b, pa, pb\>$.  
We have
$$\Ap(S) = \{0, pb, 2pb, \ldots, (a - 1)pb, pa, 2pa, \ldots, (b - 1)pa, pab\},$$
wherein each element has unique factorization except $pab$, which has elasticity $\tfrac{a}{b}$.
\end{proof}

One natural question to ask is:\ which subsets of $\QQ_{\ge 1}$ can occur as $R(\Ap(S))$ for some numerical semigroup $S$?  Certainly we must have $1 \in R(\Ap(S))$, and the sole singleton subset, $\{1\}$, is achieved for all Ap\'ery half-factorial $S$.  All subsets of size two are realizable by Corollary~\ref{c:2elasticities}.  Larger subsets of $\QQ_{\ge 1}$ remain unresolved.  

\begin{corollary}\label{c:2elasticities}
Given $r \in \QQ_{>1}$, some numerical semigroup $S$ has $R(\Ap(S)) = \{1, r\}$.  
\end{corollary}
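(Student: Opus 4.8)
The plan is to obtain this immediately from Theorem~\ref{t:masterexample}, which already constructs semigroups whose Ap\'ery set realizes precisely the two elasticities $1$ and $\tfrac{a}{b}$. The only work is to package the given $r$ in the form that theorem demands.

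First I would write the prescribed $r \in \QQ_{>1}$ in lowest terms as $r = \tfrac{a}{b}$, where $a$ and $b$ are positive integers with $\gcd(a,b) = 1$. Since $r > 1$ forces $a > b$, and $b \ge 1$ because it is a positive integer, the pair $(a,b)$ satisfies exactly the hypotheses $a > b \ge 1$ and $\gcd(a,b) = 1$ required by Theorem~\ref{t:masterexample}. This reduction step is the only place any verification is needed, and it is routine: passing to lowest terms is what guarantees both coprimality and that the resulting numerator and denominator, rather than some nonreduced representative, meet the inequality constraints.

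Then I would invoke Theorem~\ref{t:masterexample} with these values of $a$ and $b$, which produces a numerical semigroup $S$ with $R(\Ap(S)) = \{1, \tfrac{a}{b}\} = \{1, r\}$, exactly as claimed. (If a self-contained argument were preferred, one could instead exhibit the witness directly, namely $S = \<a + b, pa, pb\>$ for a prime $p \nmid (a+b)$ with $a + b < pb$, and reread the computation of $\Ap(S)$ from the proof of Theorem~\ref{t:masterexample}; but citing the theorem suffices.) Because every step is a direct application, there is no genuine obstacle here—the content lives entirely in Theorem~\ref{t:masterexample}, and this corollary merely records that reducing an arbitrary $r > 1$ to lowest terms always supplies an admissible coprime pair.
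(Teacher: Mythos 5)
Your proof is correct and matches the paper's own proof, which likewise writes $r = \tfrac{a}{b}$ in reduced form and applies Theorem~\ref{t:masterexample}. You simply spell out the routine verification that the reduced pair satisfies $a > b \ge 1$ and $\gcd(a,b) = 1$, which the paper leaves implicit.
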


\begin{proof}
Write $r = \frac{a}{b}$ in reduced form, and apply Theorem~\ref{t:masterexample}.  
\end{proof}

Since $\Ap(S)$ is a finite set, we can consider the full distribution of elasticity over its elements, and not just its maximum $\rho(\Ap(S))$.  We call the \emph{Ap\'ery half-factorial fraction}, or AHFF, the ratio of the number of half-factorial elements of $\Ap(S)$, to $|\Ap(S)|$.  If $S$ is AHF, then its AHFF is $1$.

Theorem~\ref{t:masterexample} produced a single non-half-factorial element of $\Ap(S)$; hence  $S$ had AHFF close to $1$.    Certainly the AHFF cannot be zero, as each element of $\mathcal A(S)$ is half-factorial. One wonders how small the AHFF can be.   Theorem~\ref{t:smallahff}, illustrated in Figure~\ref{f:posets}(a), displays the smallest possible AHFF while maintaining $e(S) = 3$. 

\begin{theorem}\label{t:smallahff}
The fraction of Ap\'ery set elements of a numerical semigroup that are half-factorial can be arbitrarily close to $0$.
\end{theorem}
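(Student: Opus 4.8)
The plan is to exhibit an explicit one-parameter family of three-generated semigroups whose Apéry sets grow without bound while the number of half-factorial elements stays fixed. For each odd integer $p \ge 7$ set $S_p = \<2p-4,\,2p,\,3p\>$; the case $p = 7$ is the semigroup $\<10,14,21\>$ of Figure~\ref{f:posets}(a). Its multiplicity is $m = 2p-4$ (since $2p-4 < 2p < 3p$), and $\gcd(m,p) = \gcd(4,p) = 1$ because $p$ is odd, so $S_p$ is a genuine numerical semigroup; one checks that $2p$ and $3p$ are atoms for these $p$, so $e(S_p) = 3$. I will show that, no matter how large $p$ is, exactly six elements of $\Ap(S_p)$ are half-factorial, whence the AHFF equals $6/(2p-4) = 3/(p-2) \to 0$.

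The first main step is to compute $\Ap(S_p)$ exactly. Since subtracting $m$ from an element of $\Ap(S_p)$ must leave $S_p$, no Apéry element can use the generator $m$ in any factorization; hence every nonzero element of $\Ap(S_p)$ is a combination of $2p$ and $3p$ alone, and so is a multiple $kp$ with $k \ge 2$. As $\gcd(m,p)=1$, the map $k \mapsto kp \bmod m$ is a bijection on $\ZZ/m\ZZ$, so the minimal element of $S_p$ in each nonzero residue class mod $m$ is $k_r p$, where $k_r$ is the least $k \ge 2$ with $kp$ in that class. Letting $k$ run over $\{0,1,\ldots,m-1\}$ hits every class once; discarding $k=0$ (the class of $0$) and replacing the forbidden $k=1$ by $k = m+1$ yields the clean description $\Ap(S_p) = \{0\} \cup \{kp : k \in \{2,3,\ldots,m-1\} \cup \{m+1\}\}$.

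The second step is a half-factoriality criterion for the elements $kp$. A factorization of $kp$ corresponds to a representation $k = 2b+3c$ with $b,c \ge 0$, of length $b+c = (k-c)/2$; since the length is strictly monotone in $c$, two distinct representations always have distinct lengths, so $kp$ is half-factorial if and only if its representation is unique. Counting the admissible values of $c$ of the correct parity in $[0,\lfloor k/3\rfloor]$ shows uniqueness holds exactly for $k \in \{2,3,4,5,7\}$. For $p \ge 7$ we have $m = 2p-4 \ge 10$, so all five of these values lie in $\{2,\ldots,m-1\}$ while $m+1 = 2p-3 \ge 11$ does not; hence there are precisely five half-factorial elements of the form $kp$, together with the unit $0$.

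Combining the two steps, $\Ap(S_p)$ has $m = 2p-4$ elements, of which exactly $6$ are half-factorial, so the AHFF is $3/(p-2) \to 0$ as $p \to \infty$. The step I expect to be most delicate is the exact determination of $\Ap(S_p)$: one must argue cleanly that minimal class representatives never involve $m$, and then handle the exceptional residue (the class of $p$ itself), where the smallest admissible multiple of $p$ jumps from $k=1$ up to $k = m+1$. Once that is in place, the remainder reduces to the elementary representation count identifying $\{2,3,4,5,7\}$, which is routine.
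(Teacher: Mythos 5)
Your proof is correct and is essentially the same as the paper's: the paper also takes $S = \<m, 2p, 3p\>$ with $m < 2p$ and $\gcd(m,p) = 1$ (there parameterized as $m = 5n$ with $p$ prime, $p \neq 5$, $2p > 5n$), obtains the same Ap\'ery set $\{0\} \cup \{kp : 2 \le k \le m-1\} \cup \{(m+1)p\}$, and identifies the same six half-factorial elements $0, 2p, 3p, 4p, 5p, 7p$, yielding AHFF $= 6/m \to 0$. Your one-parameter choice $m = 2p - 4$ with $p$ odd is merely a different parameterization of the same family (your $p = 7$ case and the paper's $n=2$, $p=7$ case are the identical semigroup $\<10,14,21\>$), with the details that the paper leaves implicit written out in full.
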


\begin{proof}
Let $p, n \in \ZZ_{\ge 1}$ with $p$ prime, $p \neq 5$, and $2p > 5n > 5$.  Set $m = 5n$, $a_1 = 2p$, $a_2 = 3p$, and take $S = \<m, a_1, a_2\>$.  We have 
$$\Ap(S) = \{0, 2p, 3p, \ldots, (5n - 1)p, (5n + 1)p\}.$$
Since $6p = 3a_1 = 2a_2$, only $0$, $2p$, $3p$, $4p$, $5p$ and $7p$ are half-factorial in $\Ap(S)$.  As~such, the AHFF of $S$ is $\frac{6}{5n}$.
\end{proof}

With the generality of the family in Theorem~\ref{t:masterexample}, one might wonder if any $S$ with $e(S) = 3$ can be AHF.  One such family is provided in Theorem~\ref{t:ahfexamples}, an example of which is illustrated in Figure~\ref{f:posets}(b).  

\begin{theorem}\label{t:ahfexamples}
For each $n \in \ZZ_{\ge 2}$, the semigroup $S = \<n^2, n^2 + n, 2n^2 + 1\>$ is AHF.
\end{theorem}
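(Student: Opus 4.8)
The plan is to prove the stronger statement that \emph{every} element of $\Ap(S)$ has a unique factorization; half-factoriality (indeed $\rho(\Ap(S)) = 1$) is then immediate. Throughout write $m = n^2$, $a_1 = n^2 + n$, and $a_2 = 2n^2 + 1$, so $\mathcal A(S) = \{m, a_1, a_2\}$. The first step is the standard observation that no factorization of an Ap\'ery element uses the multiplicity: if $x \in \Ap(S)$ and $x = \alpha m + \beta a_1 + \gamma a_2$ with $\alpha \geq 1$, then $x - m = (\alpha - 1)m + \beta a_1 + \gamma a_2 \in S$, contradicting $x \in \Ap(S)$. Hence every factorization of $x$ has the form $x = \beta a_1 + \gamma a_2$.

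Next I would record the two relations $n a_1 = (n+1)m$ and $n a_2 = a_1 + (2n-1)m$, both routine to verify, and use them to cap the exponents in any factorization of an Ap\'ery element. Suppose $x = \beta a_1 + \gamma a_2 \in \Ap(S)$. If $\gamma \geq n$, substituting $n a_2 = a_1 + (2n-1)m$ rewrites $x$ as $(\beta+1)a_1 + (\gamma - n)a_2 + (2n-1)m$, so that $x - m = (\beta+1)a_1 + (\gamma-n)a_2 + (2n-2)m \in S$ (all coefficients nonnegative since $n \geq 2$), contradicting $x \in \Ap(S)$; thus $\gamma \leq n-1$. Symmetrically, if $\beta \geq n$ then $n a_1 = (n+1)m$ yields $x - m = (\beta-n)a_1 + \gamma a_2 + nm \in S$, forcing $\beta \leq n-1$. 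Therefore every factorization of every Ap\'ery element satisfies $0 \leq \beta, \gamma \leq n-1$.

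Finally I would invoke a base-$n$ argument to force uniqueness. Modulo $m = n^2$ we have $a_1 \equiv n$ and $a_2 \equiv 1$, so a factorization $\beta a_1 + \gamma a_2$ lies in the residue class $\beta n + \gamma \bmod n^2$. As $(\beta,\gamma)$ ranges over $\{0,\ldots,n-1\}^2$ the quantity $\beta n + \gamma$ takes each value in $\{0,\ldots,n^2-1\}$ exactly once. Consequently, two factorizations of the same $x$ whose exponents both lie in $\{0,\ldots,n-1\}$ represent the same residue and hence the same pair $(\beta,\gamma)$. This gives unique factorization for each element of $\Ap(S)$, so each is half-factorial and $S$ is AHF. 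As a byproduct, comparing the $n^2$ distinct-residue elements with $|\Ap(S)| = n^2$ recovers $\Ap(S) = \{\beta a_1 + \gamma a_2 : 0 \leq \beta, \gamma \leq n-1\}$.

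The substantive step is the middle one: pinning down the two relations $n a_1 = (n+1)m$ and $n a_2 = a_1 + (2n-1)m$ and using them to bound the exponents. These relations are precisely the ``length-changing trades'' available in $S$, and the content of the theorem is that they require at least $n$ copies of $a_1$ or $a_2$ to apply, more than any Ap\'ery element can supply. The first and last steps are then essentially bookkeeping, with the only care needed being the verification that the rewritten expressions genuinely lie in $S$.
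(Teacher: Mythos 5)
Your proof is correct and takes essentially the same approach as the paper, whose entire proof is the one-line assertion $\Ap(S) = \{a(n^2+n) + b(2n^2+1) : 0 \le a,b \le n-1\}$. What you have done is supply, correctly, exactly the verification the paper omits: the exponent bounds $\beta, \gamma \le n-1$ via the relations $na_1 = (n+1)m$ and $na_2 = a_1 + (2n-1)m$, and uniqueness of factorization via the base-$n$ observation that $a_1 \equiv n$, $a_2 \equiv 1 \pmod{n^2}$.
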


\begin{proof}
$\Ap(S) = \{a(n^2 + n) + b(2n^2 + 1) : 0 \le a,b \le n - 1\}$.
\end{proof}

Theorem~\ref{t:ahfexamples} also demonstrates that the width of the Ap\'ery poset, which is always bounded below by $e(S)$, can be larger. 

\begin{corollary}\label{c:largewidth}
The width of an Ap\'ery poset can be arbitrarily large, even for $e(S) = 3$.  
\end{corollary}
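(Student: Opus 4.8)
The plan is to combine Theorem~\ref{t:ahfexamples} with Theorem~\ref{t:lengthgrading}. Fix $n \ge 2$ and set $S = \langle n^2, n^2+n, 2n^2+1\rangle$. By Theorem~\ref{t:ahfexamples} this semigroup is AHF, so Theorem~\ref{t:lengthgrading} tells us its Ap\'ery poset $(\Ap(S), \preceq)$ is graded. Since $e(S) = 3$ for every $n$, it suffices to exhibit an antichain whose size grows with $n$; the conclusion then follows by letting $n \to \infty$.

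The mechanism I would use is that in a graded poset any family of elements sharing a common rank is automatically an antichain, since comparable elements must have distinct ranks. So the first step is to compute the rank of a typical Ap\'ery element. Writing $a_1 = n^2+n$ and $a_2 = 2n^2+1$, I would invoke the standard property that no factorization of an element of $\Ap(S)$ can involve the multiplicity $m = n^2$: if it did, deleting one copy of $m$ would place $x - m$ in $S$, contradicting $x \in \Ap(S)$. Hence every factorization of $x = a\,a_1 + b\,a_2$ uses only $a_1$ and $a_2$, and because $S$ is half-factorial on $\Ap(S)$ all such factorizations share the common length $a+b$. Thus the rank of $x$ in the Ap\'ery poset is exactly $a+b$.

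It then remains to count elements of a fixed rank. Theorem~\ref{t:ahfexamples} gives $\Ap(S) = \{a\,a_1 + b\,a_2 : 0 \le a,b \le n-1\}$, a set of at most $n^2$ elements; since $|\Ap(S)| = m = n^2$, the parametrizing map $(a,b) \mapsto a\,a_1 + b\,a_2$ must be a bijection from $\{0,\dots,n-1\}^2$ onto $\Ap(S)$. Consequently the elements of rank $n-1$ are precisely the images of the $n$ pairs $(0,n-1), (1,n-2), \dots, (n-1,0)$ lying on the antidiagonal $a+b = n-1$. These are $n$ distinct Ap\'ery elements of equal rank, hence an antichain, so the width is at least $n$ and therefore unbounded.

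The only delicate point is the identification of poset rank with factorization length, and I expect this to be the main (though mild) obstacle: one must justify that $a+b$ is genuinely the rank rather than merely the length of one chain, which is exactly what gradedness from Theorem~\ref{t:lengthgrading} supplies together with the half-factoriality of $\Ap(S)$. Once that is secured, the remaining lattice-point count on the antidiagonal is entirely routine, and no further difficulty arises.
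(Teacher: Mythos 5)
Your proposal is correct and matches the paper's (implicit) argument: the paper offers no formal proof of Corollary~\ref{c:largewidth}, instead noting that the family $S = \langle n^2, n^2+n, 2n^2+1\rangle$ of Theorem~\ref{t:ahfexamples}, whose Ap\'ery poset is the $n \times n$ grid pictured in Figure~\ref{f:posets}(b), exhibits width at least $n$. Your write-up simply supplies the details the paper leaves unstated --- gradedness via Theorem~\ref{t:lengthgrading}, the identification of rank with the coefficient sum $a+b$, and the antidiagonal antichain --- all of which are correct.
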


\section{Mean Ap\'ery elasticity}
\label{sec:meanaperyelast}

Motivated in part by recent investigations into ``average'' factorization lengths in numerical semigroups \cite{MR3921068}, we next consider the \emph{mean Ap\'ery elasticity}, i.e., 
$$MAE(S) = \frac{1}{|\Ap(S)|} \sum_{n \in \Ap(S)} \rho(n).$$
If $S$ is half-factorial, of course $MAE(S) = 1$.  The family from Corollary~\ref{c:2elasticities} has mean Ap\'ery elasticity $1 + \frac{1}{b} - \frac{2}{a + b}$.  Theorem~\ref{t:largemae} will show that mean Ap\'ery elasticity may be arbitrarily large, though one may still wonder which elements of $\QQ_{\ge 1}$ occur as $MAE(S)$ for some numerical semigroup $S$.

\begin{theorem}\label{t:largemae}
The values of $MAE(S)$, with $e(S) = 3$, can be arbitrarily large.
\end{theorem}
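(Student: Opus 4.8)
The plan is to exhibit, for each $k \ge 1$, an embedding-dimension-$3$ semigroup in which a \emph{positive fraction} of the Ap\'ery set has elasticity on the order of $k$. Since the averaging in $MAE$ divides by $|\Ap(S)| = m$, a single large-elasticity element cannot suffice (the family of Corollary~\ref{c:2elasticities} has $MAE = 1 + \tfrac1b - \tfrac2{a+b} < 2$), so the whole point is to produce \emph{many} high-elasticity elements simultaneously. Fix $k \ge 1$, choose $m \ge 2(2k+1)^2$, choose a prime $p > m$, and set
$$S = \langle m,\ 2p,\ (2k+1)p\rangle, \qquad T = \langle 2,\ 2k+1\rangle.$$
Since $p > m$ one checks readily that $2p$ and $(2k+1)p$ are atoms and that $m$ is the multiplicity, so $e(S) = 3$, and $\gcd(m,p) = 1$.

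The key reduction is that no factorization of an Ap\'ery element uses the atom $m$: if $n \in \Ap(S)$ then $n - m \notin S$, so $m$ cannot appear. Hence every $n \in \Ap(S)$ lies in $\langle 2p, (2k+1)p\rangle$, writes as $n = tp$ with $t \in T$, and has exactly the factorizations of $t$ in $T$; thus $\rho(n) = \rho_T(t)$, the elasticity computed inside $T$. Next I would show $\{tp : t \in T,\ 0 \le t \le m-1\} \subseteq \Ap(S)$: every element of $S$ has the form $xm + t'p$ with $t' \in T$, so any element of the residue class of $tp$ is $xm + t'p$ with $t' \equiv t \pmod m$, which forces $t' \ge t$ (as $0 \le t < m$) and hence value $\ge tp$; so $tp$ is minimal in its class and therefore Ap\'ery.

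It then suffices to bound $MAE(S)$ below using the \emph{deep} even indices. For even $t$ with $m/2 \le t \le m-1$ (so $t \ge (2k+1)^2$), the longest factorization of $t$ in $T$ uses only $2$'s, of length $t/2$, while the shortest uses $\lfloor t/(2k+1)\rfloor$ copies of $2k+1$ plus a bounded correction, of length at most $t/(2k+1) + (2k+1)$. Since $t \ge (2k+1)^2$ gives $2k+1 \le t/(2k+1)$, this yields
$$\rho_T(t) \ge \frac{t/2}{t/(2k+1) + (2k+1)} \ge \frac{t/2}{2t/(2k+1)} = \frac{2k+1}{4}.$$
There are at least $m/4 - 1$ such even $t$, each contributing elasticity $\ge (2k+1)/4$, so $MAE(S) \ge \tfrac1m\bigl(\tfrac{m}{4}-1\bigr)\tfrac{2k+1}{4} \ge \tfrac{2k+1}{32}$, which tends to $\infty$ with $k$. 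The main obstacle is exactly this balancing act: the elasticity cap $\rho(\Ap(S)) \le (2k+1)/2$ grows with $k$, but realizing it on a positive fraction of $\Ap(S)$ forces $m$—which also sits in the denominator—to be large, of order $(2k+1)^2$, so that many Ap\'ery indices lie deep enough in $T$ to admit both an all-$2$'s and a mostly-$(2k+1)$'s factorization; verifying Ap\'ery membership together with the two-sided length estimate is the technical heart.
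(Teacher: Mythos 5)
Your proposal is correct, and it is built on the same structural template as the paper's proof: a three-generator semigroup $\langle m, 2p, dp\rangle$ with $d$ odd and $\gcd(m,p)=1$, so that no factorization of an Ap\'ery element uses $m$, every Ap\'ery element is $tp$ with $t$ in the ``inner'' semigroup $\langle 2, d\rangle$, and its elasticity is computed there by trading $2$'s against $d$'s. Where you differ is in the parameters and the final estimate. The paper couples the multiplicity to the odd generator ($m = 4q+8$, $d = q$ an odd prime, $p > 2q+4$) and evaluates the elasticity exactly on the arithmetic progression $(2q+2i)p$, $0 \le i < q$, getting $\rho = \frac{q+i}{2+i}$ and hence a harmonic-sum lower bound $MAE(S) \gtrsim \frac{1}{4}\ln q$ --- logarithmic growth in the data. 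You instead decouple the parameters, taking $m \ge 2(2k+1)^2$ much larger than $d = 2k+1$, so that every even $t \in [m/2, m-1]$ lies deep enough in $\langle 2, 2k+1\rangle$ to admit both an all-$2$'s factorization and a mostly-$(2k+1)$'s factorization; this gives a uniform bound $\rho \ge \frac{2k+1}{4}$ on a quarter of the Ap\'ery set and hence $MAE(S) \ge \frac{2k+1}{32}$, linear in $d$. Your version costs a quadratically larger multiplicity but buys a stronger, cleaner bound (a positive fraction of $\Ap(S)$ attains a constant fraction of the maximal elasticity, rather than a slowly diverging average), and your explicit verification that $tp \in \Ap(S)$ for $t \in \langle 2,2k+1\rangle$, $t < m$, is more careful than the paper's, which simply asserts its Ap\'ery set. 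Both proofs are valid; yours trades the exact elasticity computation for robust inequalities.
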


\begin{proof}
Let $p, q$ be odd primes with $p > 2q + 4$.  Set $m = 4q + 8, a_1 = 2p, a_2 = qp$, and take $S=\<m,a_1,a_2\>$.  We have 
$$Ap(S) = \{0, 2p, 4p, \ldots, (q - 1)p, qp, (q + 1)p, \ldots, \tfrac{1}{2}(9q + 17)p\},$$
where all multiples of $p$ are present after $qp$ except $(4q + 8)p$.  Now, consider the set
$$T = \{(2q + 2i)p : 0 \le i < q\} \subset \Ap(S).$$
We calculate elasticity of the elements of $T$ as
$$\rho((2q + 2i)p) = \rho((q + i)a_1) = \rho(2a_2 + ia_1) = \frac{q + i}{2 + i} \ge \frac{q}{2 + i},$$
and consequently 
$$\begin{array}{r@{}c@{}l}
MAE(S)
&{}={}& \displaystyle \frac{1}{m} \sum_{n \in \Ap(S)} \rho(n)
\ge \frac{3q + 8}{m} + \frac{1}{m} \sum_{n \in T} \rho(n)
\ge \frac{3q + 8}{m} + \frac{q}{m} \sum_{i = 0}^{q - 1} \frac{1}{2 + i} \\
\end{array}$$
grows arbitrarily large as $q \to \infty$.
\end{proof}

\section{Asymptotic distributions}
\label{sec:asymptotic}

Given a numerical semigroup $S$, denote by $g(S) = |\ZZ_{\ge 0} \setminus S|$ the \emph{genus} of $S$.  Let $n_g$ denote the number of numerical semigroups with genus $g$, and let $n_{m,g}$ denote the number of numerical semigroups with multiplicity $m$ and genus $g$.  For example, letting $f_g$ denote the $g$'th Fibonacci number, it was recently proven that $n_g/f_g$ approaches a constant as $g \to \infty$~\cite{MR3053785}, although it is still open whether $n_{g+1} \ge n_g$ for every $g \ge 0$.  On the other hand, for fixed $m$, the ratio $n_{m,g} / g^{m-1}$ approaches a constant as $g \to \infty$.  

There has been a recent push to understand the distribution of numerical semigroups with a given genus across different special families.  For example, if $M_g$ and $M_{m,g}$ denote, respectively, the number of maximal embedding dimension numerical semigroups with genus $g$ and the number with both multiplicity $m$ and genus $g$, then $M_g/n_g \to 0$ as $g \to \infty$, while $M_{m,g}/n_{m,g} \to 1$ as $g \to \infty$; see~\cite{MR3978504,MR2875324}.  

Continuing in this vein, let $h_g$ denote the number of AHF numerical semigroups with genus $g$, and let $h_{m,g}$ denote the number of AHF numerical semigroups with multiplicity $m$ and genus $g$.  Theorems~\ref{t:asympfixedm} and~\ref{t:asympgenus} below demonstrate that AHF numerical semigroups form a much larger class than those with maximum embedding dimension.  

\begin{theorem}\label{t:asympfixedm}
For each fixed $m \ge 2$, we have
$$\lim_{g \to \infty} \frac{h_{m,g}}{n_{m,g}} = 1.$$
\end{theorem}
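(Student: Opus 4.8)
The plan is to trap the ratio $h_{m,g}/n_{m,g}$ between two quantities that both tend to $1$, leveraging the already-cited fact that maximal embedding dimension numerical semigroups dominate once the multiplicity is fixed. The key observation is that the special case $e(S) \ge m(S) - 1$ of Theorem~\ref{t:extremenumgens} was stated with precisely this application in view: any maximal embedding dimension semigroup satisfies $e(S) = m(S) \ge m(S) - 1$ and is therefore AHF. Hence every maximal embedding dimension numerical semigroup of multiplicity $m$ and genus $g$ is counted among the AHF ones, yielding the inclusion of counting functions $M_{m,g} \le h_{m,g}$.

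Given this, I would combine it with the trivial upper bound $h_{m,g} \le n_{m,g}$ to write the sandwich
$$\frac{M_{m,g}}{n_{m,g}} \le \frac{h_{m,g}}{n_{m,g}} \le 1,$$
and then apply the cited asymptotic $M_{m,g}/n_{m,g} \to 1$ (see~\cite{MR3978504,MR2875324}) together with the squeeze theorem to conclude $\lim_{g\to\infty} h_{m,g}/n_{m,g} = 1$. The entire argument is then just this two-sided estimate followed by a limit comparison.

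The proof is short because its real content is imported: the delicate enumeration of maximal embedding dimension semigroups by genus is handled in the literature. The only step demanding care is the first inclusion, namely recognizing that the hypothesis $e(S) \ge m(S) - 1$ in Theorem~\ref{t:extremenumgens} already certifies the maximal embedding dimension case as AHF, which is exactly what makes the lower bound $M_{m,g}$ available. The main obstacle would arise only if one insisted on a self-contained proof instead of citing the MED asymptotics; in that case one would have to re-derive $M_{m,g}/n_{m,g} \to 1$ directly, presumably through the Kunz-coordinate description, by showing that the non-maximal-embedding-dimension configurations form a negligible (higher-codimension) portion of the lattice points in the genus slice $\sum_i k_i = g$ as $g \to \infty$.
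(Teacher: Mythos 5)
Your proposal is correct and is essentially the paper's own argument: the paper's one-line proof (``Apply \cite[Corollary~1]{MR3978504}'') implicitly relies on exactly your sandwich $M_{m,g} \le h_{m,g} \le n_{m,g}$, with the fact that maximal embedding dimension implies AHF (via Theorem~\ref{t:extremenumgens}) and the squeeze step left unstated. You have simply made explicit what the paper compresses into a citation.
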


\begin{proof}
Apply \cite[Corollary~1]{MR3978504}.  
\end{proof}

Identifying the precise value of the limit below will likely be challenging, considering the long and technical nature of the proof of \cite[Theorem~1]{MR3053785}.  Out of the 1179593 numerical semigroups with genus at most 25, we find 1032971 (about 88\%) are AHF.

\begin{theorem}\label{t:asympgenus}
We have
$$0 < \lim_{g \to \infty} \frac{h_g}{n_g} < 1.$$
\end{theorem}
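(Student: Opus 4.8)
The plan is to work with the \emph{Kunz coordinates} of a numerical semigroup $S$ of multiplicity $m$: writing $\Ap(S) = \{w_0 = 0, w_1, \ldots, w_{m-1}\}$ with $w_i$ the least element of $S$ congruent to $i$ modulo $m$, set $w_i = k_i m + i$, so each $k_i \ge 1$ and the genus is $g = \sum_{i=1}^{m-1} k_i$. The depth condition $F(S) < dm$ is equivalent to $k_i \le d$ for all $i$. I would first record the reduction supplied by \cite{MR3053785}: not only does $n_g \sim S\varphi^g$ (with $\varphi = \tfrac{1+\sqrt5}{2}$), but the proportion of genus-$g$ semigroups with every $k_i \le 3$ tends to $1$. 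Writing $A_g$ for the number of AHF semigroups of genus $g$ with all $k_i \le 3$, the semigroups having some $k_i \ge 4$ contribute $o(\varphi^g)$ to both $h_g$ and $n_g$, so that $h_g/n_g = A_g/n_g + o(1)$ and it suffices to understand $A_g$ against $n_g \sim S\varphi^g$.

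For the lower bound I claim that every $S$ with all $k_i \le 2$ is AHF. Such a semigroup has all Apéry elements $w_i < 3m$; since no factorization of an Apéry element uses the atom $m$ (for $w_i = m+s$ with $s \in S$ would give $w_i - m \in S$), every factorization has length $\ell$ with $\ell m < w_i < 3m$, hence $\ell \le 2$. An element that is an atom admits only length-$1$ factorizations and a non-atom only length-$2$ factorizations, so no Apéry element has two distinct lengths; by Theorem~\ref{t:lengthgrading}, $S$ is AHF. The profiles $(k_1, \ldots, k_{m-1}) \in \{1,2\}^{m-1}$ all satisfy the Kunz inequalities, and genus $g = (m-1) + \#\{i : k_i = 2\}$, so the count of such semigroups of genus $g$ is $\sum_{j} \binom{j}{g-j} = f_{g+1}$. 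Since $f_{g+1} = \Theta(\varphi^g) = \Theta(n_g)$, we have $f_{g+1} \le h_g \le n_g$ and hence $\liminf_g h_g/n_g > 0$.

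For the upper bound I would exhibit a family of non-AHF semigroups of positive density. Reserving a fixed number of residues to carry a ``coincidence gadget'' — for instance atoms $a,b \in (m,2m)$ arranged so that $3a = 2b$ is an Apéry element with both a length-$3$ and a length-$2$ factorization, as in $\<5,6,9\>$ where $18 = 3\cdot 6 = 2\cdot 9$ — while letting the remaining $\Theta(m)$ coordinates range freely over $\{1,2\}$, produces $\Theta(f_g) = \Theta(\varphi^g)$ non-AHF semigroups of genus $g$. The bookkeeping needed to confirm that $a,b$ remain atoms, that the coincident element stays in $\Ap(S)$ and non-half-factorial, and that closure and the genus count are preserved is routine. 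Consequently $n_g - h_g \ge n_g - A_g \gtrsim \varphi^g$, so $\limsup_g h_g/n_g < 1$.

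Existence of the limit is the \textbf{main obstacle}, and is the source of the difficulty in pinning down its value. The content is that $A_g \sim c\,\varphi^g$ for some constant $c$; with $n_g \sim S\varphi^g$ this gives $\lim_g h_g/n_g = c/S$, which the two bounds above then place strictly between $0$ and $1$. The key structural point is that, within the regime $k_i \le 3$, being AHF is a \emph{local} condition on $(k_1,\ldots,k_{m-1})$: by Theorem~\ref{t:lengthgrading}, failure of AHF means some $w_i$ supports two saturated chains to $0$ in the Apéry poset of different lengths, and since all such chains here have length at most $3$, the existence of such a pair is determined by the coordinates at a bounded set of positions entering the relevant covers. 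I would therefore adapt the generating-function and singularity analysis of \cite{MR3053785} — which already extracts $n_g \sim S\varphi^g$ from precisely such coordinate conditions — to the subclass cut out by these length-coincidence constraints, obtaining the exponential rate $\varphi$ together with a well-defined constant $c$. Pushing this adaptation through the nonlocal Kunz inequalities is the technical heart of the argument; everything else is elementary.
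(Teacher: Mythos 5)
Your proposal follows essentially the same route as the paper's proof for both bounds: the lower bound via the family with Kunz coordinates $k_i \in \{1,2\}$, counted by $\sum_j \binom{j}{g-j} = f_{g+1}$ and compared against Zhai's asymptotic for $n_g$; the upper bound by planting a fixed length-$2$/length-$3$ coincidence in the Ap\'ery set while the remaining coordinates range freely over $\{1,2\}$ (the paper fixes $a_1 = m+1$, $a_2 = 2m+2$, $a_3 = 3m+3$, $a_4 = m+4$, $a_{m-1} = 2m-1$ so that $a_3 = 3a_1 = a_4 + a_{m-1}$; your $3a = 2b$ gadget is an equivalent variant, and both leave the same ``routine bookkeeping'' implicit). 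Two genuine differences are worth recording. First, you prove what the paper merely asserts (``is AHF''): your length argument --- no factorization of an Ap\'ery element uses the atom $m$, every other atom exceeds $m$, and every Ap\'ery element of a depth-two semigroup is below $3m$, so all factorization lengths are at most $2$ and atom/non-atom status forces a single length --- is exactly the missing justification, and via Theorem~\ref{t:lengthgrading} it is airtight. Second, you correctly identify that these two estimates only control $\liminf$ and $\limsup$ of $h_g/n_g$; note that the paper's own proof has precisely the same limitation, since it nowhere shows the ratio converges, so the ``main obstacle'' you flag is a gap in the published argument rather than an idea you failed to recover. Your proposed remedy (observing that within depth $\le 3$ the failure of AHF is a bounded-arity condition on Kunz coordinates and rerunning Zhai's singularity analysis on the resulting subclass) is a plausible program but remains a sketch; if the theorem is read as asserting only $0 < \liminf \le \limsup < 1$, then both your argument and the paper's suffice, and yours is the more complete of the two.
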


\begin{proof}
Let $f_n$ denote the $n$'th Fibonacci number.  By \cite[Theorem~1]{MR3053785}, we have
$$\lim_{g \to \infty} \frac{f_{g+1}}{n_g} > 0.$$
As such, for the first inequality, it suffices to show that $f_{g+1} \le h_g$.  Fix a multiplicity $m \le g + 1$.  For each subset $T \subset \{1, \ldots, m-1\}$, consider the numerical semigroup $S$ with Ap\'ery set given by $\Ap(S) = \{0, a_1, \ldots, a_{m-1}\}$, where
$$a_i = \begin{cases}
2m + i & \text{if } i \in T; \\
m + i & \text{if } i \notin T.
\end{cases}$$
It is clear $S$ has multiplicity $m$ and genus $m + |T|$, and is AHF.  As such,
$$h_g = \sum_{m = 2}^{g+1} h_{m,g} \ge \sum_{m = 2}^{g+1} \binom{m - 1}{g - (m - 1)} = f_{g+1}.$$
For the other inequality, we use a similar construction, where we first let $a_1 = m + 1$, $a_2 = 2m + 2$, $a_3 = 3m + 3$, $a_4 = m + 4$, and $a_{m-1} = m + (m - 1)$, and then choose the remaining $a_i$ as above.  In each resulting semigroup, 
$$a_3 = 3a_1 = a_4 + a_{m-1}$$
is not half-factorial, and by similar reasoning to above, this family of semigroups also comprises a positive asymptotic proportion of those with genus $g$.  
\end{proof}


\newcommand{\noop}[1]{}

\end{document}